\numberwithin{equation}{section}
\newcommand{\R}{\mathbb{R}}
\newtheorem{thm}{Theorem}[section]
\newtheorem{lem}[thm]{Lemma}
\newtheorem{defn}[thm]{Definition}
\newtheorem{example}[thm]{Example}
\newtheorem{remark}[thm]{Remark}
\newenvironment{proof}{{\bf Proof.}  }{\hfill$\blacksquare$}
\newcommand{\0}{\mathbf{0}}
\newcommand{\av}[1]{\left|{#1}\right|}
\newcommand{\norm}[1]{\left\|{#1}\right\|}
\newcommand{\be}{\begin{enumerate}}
\newcommand{\bi}{\begin{itemize}}
\newcommand{\ee}{\end{enumerate}}
\newcommand{\ei}{\end{itemize}}
\newcommand{\ii}{\item}
\newcommand{\p}{{\mathbf {{p}}}}
\newcommand{\w}{{\mathbf{w}}} 
\newcommand{\x}{{\mathbf {{x}}}}
\newcommand{\y}{{\mathbf {{y}}}}
\renewcommand{\L}{\mathcal{L}}
\renewcommand{\L}{\mathcal{L}}
\renewcommand{\phi}{\varphi}
\begin{document}
\title{Stable Configurations in Social Networks} \date{\today}

\author{Jared C. Bronski \\ University of Illinois \\ \and Lee DeVille \\ University of Illinois
 \\ \and Tim Ferguson \\ University of Illinois \\ \and Michael Livesay \\ University of Illinois \\ }

\maketitle

\begin{abstract}
We present and analyze a model of opinion formation on an arbitrary network whose dynamics comes from a global energy function. We study the global and local minimizers of this energy, which we call stable opinion configurations, and describe the global minimizers under certain assumptions on the friendship graph.  We show a surprising result that the number of stable configurations is not necessarily monotone in the strength of connection in the social network, i.e. the model sometimes supports more stable configurations when the interpersonal connections are made stronger. 
\end{abstract}


\section{Introduction} \label{sec:intro}

\subsection{Social Network Models}

Over the last couple of decades, there has been a large degree of interest in models of general dynamical systems defined on networks~\cite{Strogatz.01,  Newman.Barabasi.Watts.book,  Newman.Girvan.04, Jin.Girvan.Newman.01} and in particular models of social or biological dynamics on networks~\cite{Watts.Strogatz.98, Newman.Girvan.04, Castellano.Fortunato.Loreto.09}.  Of course a wide variety of models and potential applications exist, but one simple context is the study of opinion formation on a network.  The classical voter model~\cite{Holley.Liggett.75, Holley.Liggett.78, Durrett.Levin.94, Liggett.book} and its generalizations~\cite{Sood.Antal.Redner.08} were one of the first models considered for opinion formation, but a variety of more complicated models exist~\cite{Sznajd.00, Kunegis.Lommatzsch.Bauchhage.09, Yildiz.etal.11, Altafini.12, Kunegis.etal.12, Ghaderi.Srikant.13, Auoay.etal.14, DaiPra.Louis.Minelli.14, Das.Gollapudi.Mungala.14, Berghardt.Rand.Girvan.16}.

In this paper, we consider a relatively simple dynamical model of social opinion formation whose dynamics are given by a single global potential function.  In our model, there are $n$ agents, each of which holds an opinion represented by a scalar quantity.  In the absence of any interaction with the other individuals, each agent will relax to one of two opinions, each of which is the negative of each other --- more specifically, each individual relaxes in a symmetric double well potential.  We then add on a coupling between all of the agents, but we allow for the coupling terms to be both positive and negative.   Thus we allow for both ``friends'' and ``enemies'' in this network, with the idea that one's opinions move towards those of one's friends, and away from those of one's enemies. 

In our model, all of the dynamics can be represented as a gradient flow in a potential; therefore, while our model might have multiple stable configurations, we can compare them energetically and determine which is the ``most stable'' configuration, i.e. the one which globally minimizes the potential.  In this sense, our model is quite reminiscent of both~\cite{Ashwin.etal.17} and~\cite{Berglund.Fernandez.Gentz.07.1, Berglund.Fernandez.Gentz.07.2}; in fact the latter two papers are a study of our model with only friendly connections in certain graph topologies. 

The main results of this paper are twofold.  In Section~\ref{sec:global} we describe the global mimimizers of the energy functional whenever the graph topology is ``balanced'' --- a condition on the graph which can best be summarized as ``the enemy of my enemy is my friend''.  In Section~\ref{sec:nonmonotone} we show that the dependence of the global system on the strength of the coupling can be quite complicated, and we show that increasing the coupling strength can both increase and decrease the number of minima.

\subsection{Description of Model}

\begin{defn}
Let $G = (V,E,\Gamma)$ be an undirected weighted graph; here $V = \{1,2,\dots,n\}$ are the vertices of the graph, $E  \subseteq V \times V$ are the edges, and $\gamma_{ij} = \gamma_{ji}$ is the weight on the edge joining the vertices $i, j$.  When necessary, we will denote by $\mathcal{G}_n$ as the set of all such graphs with $n$ vertices.
\end{defn}

\begin{defn}
Given a graph $G \in \mathcal{G}_n$, a function $W\colon\R \to \R$, and a parameter $\kappa \in [0,\infty)$ we define the energy
\begin{equation}\label{eq:defofE} 
E_{W,G,\kappa}(\x) := \sum_{i=1}^n W(x_i) + \frac\kappa2 \sum_{i,j=1}^n \gamma_{ij}(x_i-x_j)^2.
\end{equation}
The model we consider is the gradient flow for this energy, namely
\begin{equation}\label{eq:ode}
\frac{dx_i}{dt} = - \frac{\partial}{\partial x_i} E_{W,G,\kappa}(\x) = \kappa (\L(G)\x)_i - W'(x_i),
\end{equation}
where $\L(G)$ is the graph Laplacian whose components are given by 
\begin{equation}\label{eq:defofL}
  (\L(G))_{ij} = \begin{cases} \gamma_{ij}, & i\neq j,\\ -\sum_{k\neq i} \gamma_{ik}, &i=j.\end{cases}
\end{equation}
\end{defn}

Since we consider the gradient flow, the attracting fixed points of~\eqref{eq:ode} are exactly the local minima of~\eqref{eq:defofE}.  These minima are the main object of study in the current paper.

The functions $W$ correspond to the dynamics of an individual's opinion when uncoupled from others, and the $\gamma_{ij}$ encode the strength of interaction between individuals.  By setting $\kappa=0$, we can turn off the interaction, and we see that each individual independently moves to a local minimum of the function $W$.  Setting $\kappa$ large makes the interactions between individuals dominate.  We stress here that we do not assume $\gamma_{ij}\ge 0$, which would lead to only a friendly attracting force between individuals --- we let $\gamma_{ij}<0$, so that individuals who are enemies will have opinions that repel.   We will colloquially refer to $W$ as the ``individual potential'' and the $\gamma_{ij}$ as the ``interaction strengths''.

Now we define the set $\mathcal{W}$ of allowable potentials:

\begin{defn} \label{defofW}
Let $\mathcal{W}$ be the set of functions $W \colon \R \rightarrow \R$ such that $W$ are in $C^2(\R)$ and even, that there is some $m$ with $W'(\pm m) = 0$, $W'(x) > 0$ if $x \in (-m,0) \cup (m,\infty)$ and $W'(x) < 0$ if $x \in (-\infty,-m) \cup (0,m)$, and finally 
\begin{equation*}
  \lim_{x\to\pm\infty}\frac{W(x)}{x^2} = \infty.
\end{equation*}
\end{defn}
One can easily check that $W(x) = \frac{1}{4}(1-x^2)^2$ is in the class $\mathcal{W}$  with $m=1$, and we will refer to this in some cases as the ``classical potential''.  Since the interaction energy grows quadratically, we choose the potential $W$ to be coercive enough so that the set of minima of~\eqref{eq:defofE} are bounded; in fact, one can obtain bounds on their locations by knowing $W$ and the magnitudes of the $\gamma_{ij}$ (we discuss this further below). 

It is not hard to see that if we choose $\kappa= 0$, then there are $2^n$ minima, each with energy $0$, at the points $(\pm m,\pm m,\dots, \pm m)$; in this case these are all also global minima.  As we increase $\kappa$, we can expect several things to occur:  for some range of $\kappa$ near zero, minima might move but will persist, but of course only some of them will remain global minima.  We also expect that minima can disappear under bifurcations (and in fact it is shown in~\cite{Berglund.Fernandez.Gentz.07.1} that all bifurcations cause minima to disappear under certain conditions.

\begin{example}

Consider the graph on three vertices with edge weights 1, 1, and $-2$, where $W(x) = \frac{1}{4} (1-x^2)^2$, so that our entire energy function is
\begin{equation*}
  \sum_{i=1}^3 \frac14(1-x_i^2)^2 + \frac\kappa 2(x_1-x_2)^2 + \frac\kappa 2(x_1-x_3)^2  - \kappa(x_2-x_3)^2.
\end{equation*}
In Figures~\ref{fig:num_mins},~\ref{fig:3D} below we give two plots showing how the minima evolve as $\kappa$ increases from zero to infinity.

\begin{figure}
\centering
\begin{subfigure}{.5\textwidth}
  \centering
  \includegraphics[width=1\linewidth]{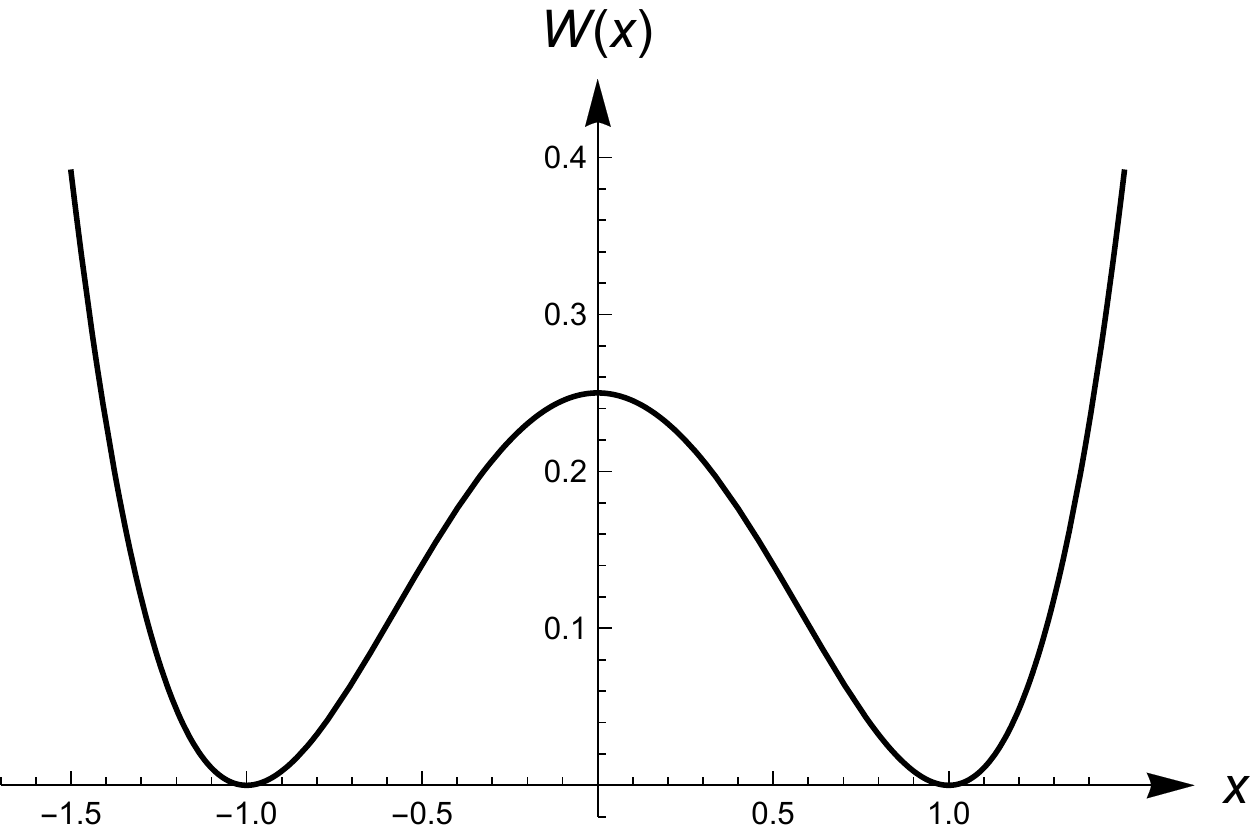}
\end{subfigure}%
\begin{subfigure}{.5\textwidth}
  \centering
  \includegraphics[width=1\linewidth]{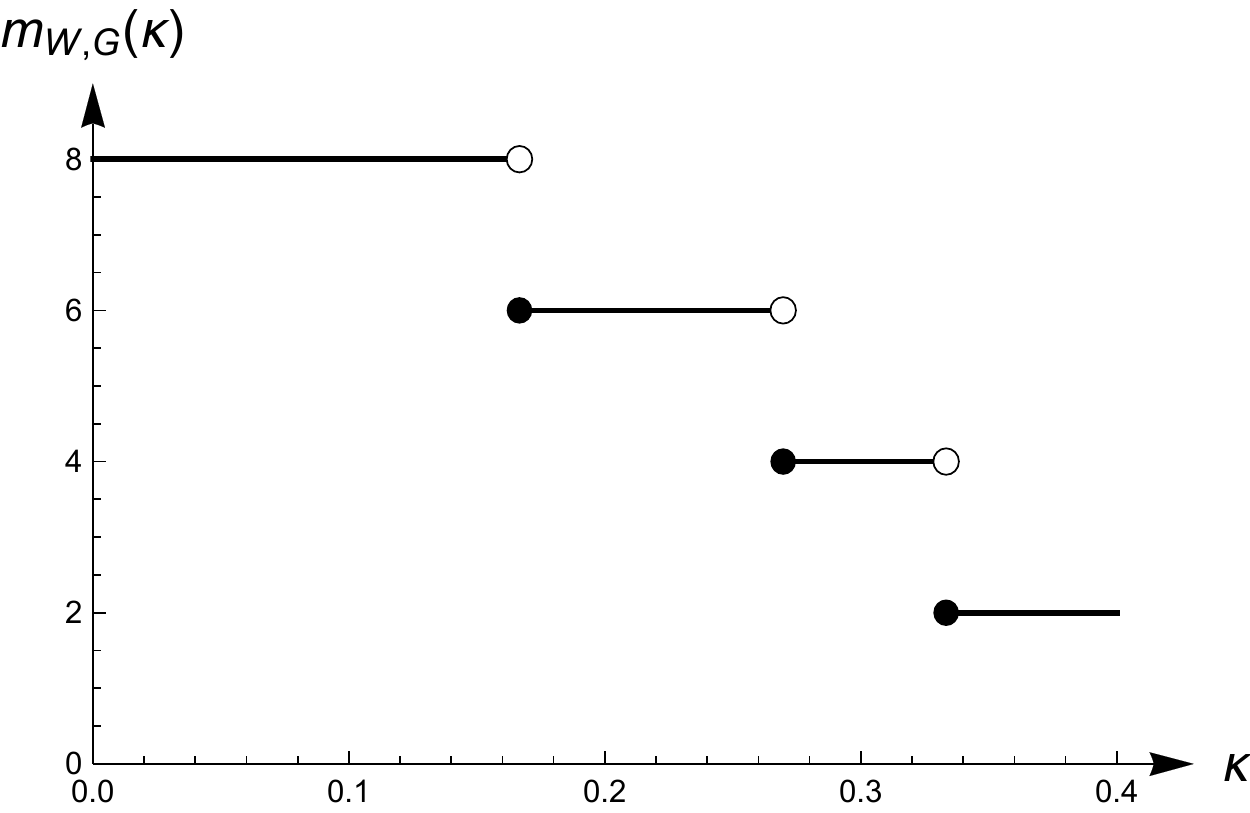}
\end{subfigure}
\caption{Plots of the potential $W$ as well as the number number of minima $m_{W,G}$. We can explicitly compute the points of discontinuity to be $\kappa = 1/6, \phi/6, 1/3$ with $\x = (0,0,0)$, $\x = \pm(0,\phi,-\phi)$, and $\x = \pm(1,1,1)$ respectively.  Here we use $\phi$ for the golden ratio, i.e. the larger root of $x^2-x-1$.}
\label{fig:num_mins}
\end{figure}

\begin{figure}[H]
\center
\includegraphics[width=.7\textwidth]{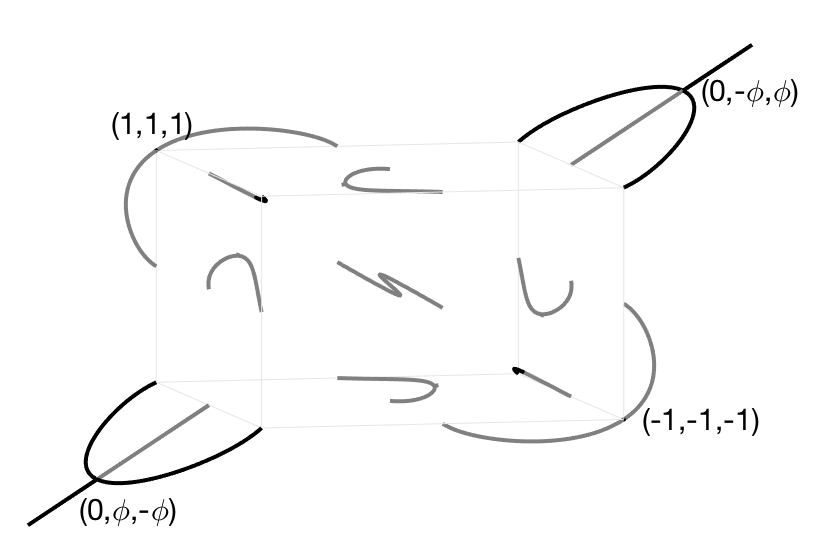}
\caption{This plot is of all the orbits of each fixed point in the domain of the energy function $E_{W,G}$ over varying values of $\kappa$ (the handle of the pitchforks extend out to infinity). The points $\x$ plotted black are stable, while grey marks are the unstable fixed points.  The pitchfork bifurcation happens at $\kappa = \phi / 6$.}
\label{fig:3D}
\end{figure}

We now discuss the nature of the bifurcations in this example. First consider the first bifurcation at $\x_0 = (0,0,0)$ and $\kappa_0 = 1/6$ and set $\y = \x - \x_0$ and $\mu = \kappa - \kappa_0$. We can rewrite our system as
\begin{align*}
\dot{\y} = \frac{1}{3}
\begin{pmatrix}
4 & 1 & -2
\\
1 & 1 & 1
\\
-2 & 1 & 4
\end{pmatrix}
\y
+ 2\mu
\begin{pmatrix}
1 & 1 & -2
\\
1 & -2 & 1
\\
-2 & 1 & 1 
\end{pmatrix}
\y - \y^3.
\end{align*}
The eigenvalues of the left most matrix are $0$, $1$, and $2$. Let $V$ be the matrix whose rows are the corresponding eigenvectors and make the change of variables $\w = V\y$. Then our system becomes
\begin{align*}
\dot{\w} =
\begin{pmatrix}
0 & 0 & 0
\\
0 & 1 & 0
\\
0 & 0 & 2
\end{pmatrix}
\w
+ 6\mu
\begin{pmatrix}
-1 & 0 & 0
\\
0 & 0 & 0
\\
0 & 0 & 1
\end{pmatrix}
\w
- V(V^\top \w)^3.
\end{align*}
From basic bifurcation theory we know that we have a center manifold which is locally represented by two functions $w_2(w_1,\mu)$ and $w_3(w_1,\mu)$ which vanish along with their first partial derivatives at $(w_1,\mu) = (0,0)$. Substituting these functions into the last two equations and using the first shows that
\begin{align*}
w_2(w_1,\mu) = O(\text{cubic terms}) \quad \text{and} \quad w_3(w_1,\mu) = O(\text{cubic terms}) \quad \text{hence} \quad \dot{w_1} = -6 \mu w_1 - \frac{1}{2} w_1^3 + O(\text{sextic terms}).
\end{align*}
This last equation however is locally topologically equivalent to
\begin{align*}
\dot{w_1} = -\mu w_1 - w_1^3
\end{align*}
which represents a pitchfork bifurcation. See \cite{MR2004534}.
\begin{figure}[H]
\center
\includegraphics[width=.4\textwidth]{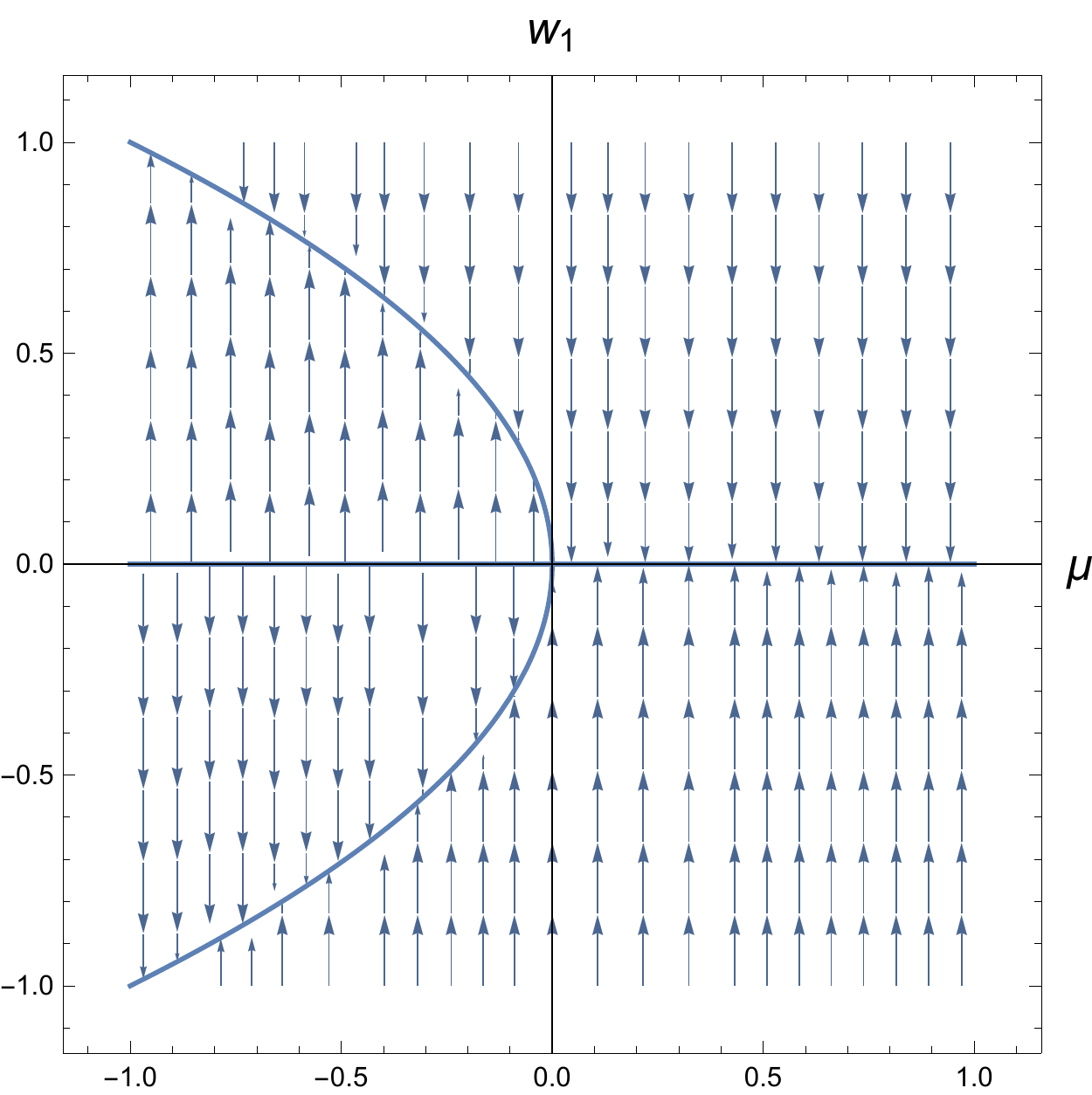}
\caption{The phase portrait of $\dot{w_1} = -\mu w_1 - w_1^3$.}
\end{figure}
Note that this equation only governs the dynamics on the center manifold and that the the system is unstable in the directions corresponding to the other variables $w_2$ and $w_3$ since their corresponding eigenvalues are positive. Therefore when $\mu < 0$, namely, $\kappa < \kappa_0 = 1/6$ we have three fixed points two of which are 2-saddles and one of which is a maximum $w_1 = 0$ which corresponds to $\x = (0,0,0)$. When $\mu > 0$, namely, $\kappa > \kappa_0 = 1/6$ we have only one fixed point, $\x = (0,0,0)$, which is a 2-saddle.

A similar calculation can be used to understand the two other bifurcations. The Jacobian of the bifurcation at $\x = (0,-\phi,\phi)$ and $\kappa = \phi/6$ has eigenvalues $0$, $-3-\sqrt{5}$, and $-3-2\sqrt{5}$. Similar to before this implies that we have to unstable directions and the interesting dynamics occur on a two dimensional center manifold. It is easy to see from Figure $\ref{fig:3D}$ that this is a pitch fork bifurcation and therefore conclude that this bifurcation consists of two minima colliding with a $1-saddle$ to form a single minimum. In the same way the Jacobian for the bifurcation at $\x = (1,1,1)$ and $\kappa = 1/3$ has eigenvalues $0$, $-2$, and $-4$. Thus there are two stable directions. One can show again by a normal form argument that this a pitch fork bifurcation and conclude that it consists of a minima and two 1-saddles colliding to form a single 1-saddle.
\end{example}

\section{Balanced graphs and global minima} \label{sec:global}


\begin{defn} \label{balanced graph}
A graph $G$ is said to be {\bf balanced} if every cycle contains an even number of negative edges.
\end{defn}


One way of describing this type of graph colloquially is the phrase ``the enemy of my enemy is my friend''.  For example, if the graph contains a triangle, then by the definition above, the three vertices in this triangle must all be friends, or exactly two of the pairs must be enemies.  A major result of Cartwright and Harary~\cite{Cartwright.Harary.56, Cartwright.Harary.68} is that all balanced graphs have a type of signed bipartite structure, namely:


\begin{thm}[Cartwright-Harary] \label{thm:CH}
A graph $G$ is balanced if and only if $V$ can be decomposed into two mutually exclusive subsets $V_1$ and $V_2$ such that $\gamma_{ij} \ge 0$ if $i$ and $j$ belong to the same subset and $\gamma_{ij} \le 0$ if $i$ and $j$ belong to different subsets.
\end{thm}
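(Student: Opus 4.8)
\medskip
\noindent\textbf{Proof strategy.} The plan is to prove both directions, treating any pair $\{i,j\}\notin E$ as having weight $\gamma_{ij}=0$, so that such a pair satisfies both $\gamma_{ij}\ge 0$ and $\gamma_{ij}\le 0$ and imposes no constraint. I would first make two harmless reductions. Since an edge of weight exactly $0$ also imposes no constraint, and since deleting any edge from $G$ only removes cycles and hence cannot destroy balancedness, I may delete all zero-weight edges and assume every remaining edge has $\gamma_{ij}\ne 0$, so each edge is unambiguously \emph{positive} or \emph{negative}. I may also assume $G$ is connected: otherwise I build the partition on each connected component separately and paste the results together arbitrarily, which is consistent because any two vertices in distinct components form a non-edge.

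For the direction ``partition $\Rightarrow$ balanced'': given $V=V_1\cup V_2$ with the stated sign property, let $C=(v_0,v_1,\dots,v_k=v_0)$ be any cycle. Each edge $\{v_t,v_{t+1}\}$ joins $V_1$ to $V_2$ precisely when it is negative; as we traverse $C$ and return to $v_0$ on the side we started from, the number of such crossing edges --- hence the number of negative edges of $C$ --- must be even. So $G$ is balanced.

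For the direction ``balanced $\Rightarrow$ partition'' (with $G$ connected and balanced), fix a spanning tree $T$ and a root $r$. For each vertex $v$ let $p(v)\in\{0,1\}$ be the parity of the number of negative edges on the unique $T$-path from $r$ to $v$, and set $V_1=\{v:p(v)=0\}$ and $V_2=\{v:p(v)=1\}$. It then remains to verify the sign condition for every edge $\{i,j\}\in E$. If $\{i,j\}$ is a tree edge, the $r$-to-$j$ path in $T$ is the $r$-to-$i$ path extended by $\{i,j\}$, so $p(i)\ne p(j)$ exactly when $\{i,j\}$ is negative, as desired. If $\{i,j\}$ is not a tree edge, let $C$ be the simple cycle formed by $\{i,j\}$ together with the $T$-path from $i$ to $j$; splitting the root-paths to $i$ and $j$ at their least common ancestor shows that the number of negative edges on the $T$-path from $i$ to $j$ has the same parity as $p(i)+p(j)$. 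Since $C$ is a cycle of the balanced graph $G$ it has an even number of negative edges, so $p(i)+p(j)$ is odd exactly when $\{i,j\}$ is negative; that is, $i$ and $j$ lie in different parts exactly when $\gamma_{ij}<0$ and in the same part exactly when $\gamma_{ij}>0$. This completes the construction.

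The only genuinely delicate point --- bookkeeping rather than depth --- is the non-tree-edge case: one must set up the vertex potentials $p(\cdot)$ so that balancedness gets applied to precisely the fundamental cycles of $T$ that govern the signs of the chords, and the least-common-ancestor parity count must be carried out carefully. The reductions and the converse implication are routine.
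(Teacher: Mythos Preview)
Your argument is correct and is the standard proof of the Cartwright--Harary structure theorem. Note, however, that the paper does not supply its own proof of this statement: it is quoted as a classical result with a citation to Cartwright and Harary~\cite{Cartwright.Harary.56, Cartwright.Harary.68}, so there is nothing in the paper to compare your proof against. Your spanning-tree/parity construction is exactly the textbook approach, and the bookkeeping you flag (the LCA parity computation for non-tree edges) is handled correctly. One minor remark: in your reduction to connected components, when you ``paste the results together arbitrarily,'' you should perhaps say explicitly that any choice of which side ($V_1$ or $V_2$) each component's root lands on is admissible, which is what you mean --- but this is cosmetic.
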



%
%


\begin{thm} \label{global minima}
Suppose that $G$ is a balanced graph, $\kappa > 0$, and $x$ is a global minimum of $E_{G,\kappa}$. Then $x_i \ne 0$ for all $i$ and $x_i$ and $x_j$ have the same sign if and only if $i$ and $j$ belong to the same clique.
\end{thm}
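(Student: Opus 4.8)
The plan is to use the signed–bipartite structure of Cartwright--Harary to reduce the statement to one purely about signs, and to eliminate vanishing coordinates with a first‑order perturbation. Fix a Cartwright--Harary decomposition $V=V_1\sqcup V_2$ (Theorem~\ref{thm:CH})---these are the two ``cliques''---and set $\sigma_i=+1$ for $i\in V_1$ and $\sigma_i=-1$ for $i\in V_2$; the defining property of the decomposition is exactly that $\sigma_i\sigma_j\gamma_{ij}=|\gamma_{ij}|\ge 0$ for all $i,j$, so in particular $\gamma_{ij}$ has the same sign as $\sigma_i\sigma_j$ on every edge. I will take $G$ connected, so that this decomposition is unique up to swapping $V_1$ and $V_2$. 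Given a global minimum $\x$, introduce the \emph{aligned} configuration $\x'$ with $x_i':=\sigma_i|x_i|$. The argument splits into (i) $\x'$ is again a global minimum and the sign structure of $\x$ is forced by the way $E_{G,\kappa}(\x')$ fails to be strictly smaller, and (ii) no coordinate vanishes.

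For (i): since $W$ is even, $\sum_i W(x_i')=\sum_i W(x_i)$, while for each pair one computes, using $\sigma_i\sigma_j\gamma_{ij}=|\gamma_{ij}|$,
\[
\gamma_{ij}(x_i'-x_j')^2-\gamma_{ij}(x_i-x_j)^2 \;=\; 2\bigl(\gamma_{ij}x_ix_j-|\gamma_{ij}|\,|x_ix_j|\bigr)\;\le\;0 .
\]
Summing over $i,j$ gives $E_{G,\kappa}(\x')\le E_{G,\kappa}(\x)$, so $\x'$ is also a global minimum and equality must hold in every summand: for every edge $ij$, either $x_ix_j=0$ or $\sgn(x_ix_j)=\sigma_i\sigma_j$.

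For (ii): suppose $x_k=0$ (equivalently $x_k'=0$). Perturbing only the $k$‑th coordinate of $\x'$ from $0$ to $\varepsilon\sigma_k$ and using $W'(0)=0$, $x_j'=\sigma_j|x_j|$, and $\sigma_k\sigma_j\gamma_{kj}=|\gamma_{kj}|$, one obtains
\[
E_{G,\kappa}(\x'(\varepsilon))-E_{G,\kappa}(\x') \;=\; -2\kappa\varepsilon\sum_{j\sim k}|\gamma_{kj}|\,|x_j| \;+\; O(\varepsilon^2).
\]
Minimality of $\x'$ forces the linear coefficient to vanish, and as every summand is $\ge 0$ this means $x_j=0$ for every neighbour $j$ of $k$. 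Hence the zero set of $\x'$ is a union of connected components of $G$; but a component on which $\x'$ vanishes cannot occur at a minimum, since replacing that block by the values $m\sigma_i$ strictly lowers $\sum_i W(x_i)$ (as $W$ is strictly decreasing on $[0,m]$, so $W(m)<W(0)$) while keeping the interaction terms on the block $\le 0$. Since $G$ is connected, the zero set is empty. Combining (i) and (ii): on every edge $\sgn(x_i)\sigma_i=\sgn(x_j)\sigma_j$, so by connectedness $\sgn(x_i)\sigma_i$ equals a constant $\eta\in\{\pm1\}$, i.e.\ $\sgn(x_i)=\eta\sigma_i$; thus $x_i$ and $x_j$ have the same sign exactly when $\sigma_i=\sigma_j$, i.e.\ exactly when $i$ and $j$ lie in the same clique.

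The step I expect to require the most care is (ii). Perturbing a vanishing coordinate of the \emph{original} minimizer $\x$ only yields $\sum_{j\sim k}\gamma_{kj}x_j=0$, which is sign‑free and perfectly compatible with $\x$ having a zero entry; and a second‑order argument is obstructed because $W''(0)$ may vanish. It is carrying out the perturbation at the \emph{aligned} minimizer---where $\sigma_k\sigma_j\gamma_{kj}=|\gamma_{kj}|\ge 0$ makes the first‑order term one‑signed---that lets connectedness finish the proof. (If one prefers not to assume $G$ connected, the same reasoning run on each connected component shows no coordinate vanishes and the sign statement holds within each component, reading ``clique'' per component.)
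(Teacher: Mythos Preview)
Your proof is correct and follows essentially the same route as the paper: introduce the aligned configuration $\widetilde x_i=\sigma_i|x_i|$, observe $E(\widetilde x)\le E(x)$ termwise, use the criticality of $\widetilde x$ at a vanishing coordinate together with the sign-definiteness $\sigma_i\sigma_j\gamma_{ij}=|\gamma_{ij}|$ to propagate zeros along edges, and then contradict minimality of $\mathbf 0$ by comparing with $m\sigma$. Your first-order perturbation in (ii) is exactly the paper's gradient equation $\sum_{j}\gamma_{ij}\widetilde x_j=0$ at $\widetilde x_i=0$, and your final step (constancy of $\sgn(x_i)\sigma_i$ along edges) is a slightly cleaner repackaging of the paper's path argument showing $E(\widetilde x)<E(x)$ when $x\ne\widetilde x$.
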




\begin{proof}
Notice that $E(\mathbf{0}) = nW(0)$ and $E(\pm m\mathbf{1})=0$, so that $\mathbf{0}$ is never a global minimum. Thus fix $x \ne \0$, and wlog assume $x_1 > 0$ and $1 \in V_1$.  Define $\widetilde{x}$ by
\[
\widetilde{x}_i = \begin{cases} |x_i| & \mbox{if $i \in V_1$,} \\ -|x_i| & \mbox{if $i \in V_2$.}\end{cases}
\]
We first show that $\gamma_{ij} \widetilde x_i \widetilde x_j \geq 0$ for all $i$ and $j$.  If $i$ and $j$ belong to the same clique, then by Theorem~\ref{thm:CH}, $\gamma_{ij} \ge 0$ and $\widetilde{x}_i$ and $\widetilde{x}_j$ have the same sign.  However, if $i$ and $j$ belong to different cliques, then $\gamma_{ij} \le 0$ and $\widetilde{x}_i$ and $\widetilde{x}_j$ have different signs. 
Since $W$ is even, the transformation $x\mapsto \widetilde x$ does not change the $W$ terms in~\eqref{eq:defofE} and can only make the quadratic terms more negative, so $E_{G,\kappa}(\widetilde{x}) \leq E_{G,\kappa}(x)$.

Now let us suppose that $x$ is a global minimum and $x_i = 0$ for some $i$.  Reusing the argument above gives us sign-definiteness of $\gamma_{ij} \widetilde{x}_j$:  if $i\in V_1$, then $\gamma_{ij}\widetilde{x}_j \ge 0$ for all $j$, and if $i\in V_2$, then $\gamma_{ij}\widetilde{x}_j \le 0$ for all $j$. We compute
\begin{equation*}
\sum_{j \ne i} \gamma_{ij} \widetilde{x}_j = \frac{1}{2\kappa} (W'(\widetilde{x}_i) - (\nabla E(\widetilde x))_i) = 0,
\end{equation*}
but since all $\gamma_{ij}\widetilde{x}_j$ have the same sign, this implies $\gamma_{ij}\widetilde{x}_j = 0$ for all $j$.
Therefore for every $j$ either $\gamma_{ij} = 0$ or $x_j = \widetilde{x}_j = 0$.  This implies that $x_j =0$ for every $j$ that is a neighbor of $i$ in the graph.  Proceeding by induction, this means that $x_j =0$ for any $j$ path-connected to $i$.  Since we assume that $G$ is connected, this implies that $x_j=0$ for all $j$, but we showed above that $0$ is not a global minimum. This is a contradiction, and thus we conclude that $x_i\neq 0$ for all $i$ whenever $x$ is a global minimum.

Finally we show that if $x\neq\widetilde{x}$ for any nonzero $x$, then $E_{G,\kappa}(\widetilde{x}) < E_{G,\kappa}(x)$.  Let us first consider the quantity $\gamma_{ij}(\widetilde{x}_i \widetilde{x}_j - x_i x_j)$. Note that this is either exactly zero, or equal to $2\gamma_{ij}\widetilde{x}_i \widetilde{x}_j > 0$.  (Moreover, this is positive whenever exactly one of the $x_i,x_j$ changes parity when $x\mapsto \widetilde x$.) We then note that
\begin{equation*}
  E_{G,\kappa}(x) - E_{G,\kappa}(\widetilde{x}) = \frac\kappa2 \sum_{i,j=1}^n \gamma_{ij} (\widetilde{x}_i \widetilde{x}_j - x_i x_j),
\end{equation*}
and since each term is nonnegative, as long as any one of these terms are positive, the sum is strictly positive.  Let us now pick $i$ as a vertex where $x_i \neq \widetilde{x}_i$.  Since $G$ is connected, there is a path from $1$ to $i$, i.e. there is a sequence of vertices $n_1,n_2,...,n_k$ such that $n_1 = 1$, $n_k = i$, and $\gamma_{n_j n_{j+1}} \neq 0$ for $\ell=1,\dots,k-1$. 
Since $x_1 = \widetilde{x}_1$ and $x_i \neq \widetilde{x}_i$, there exists a $\ell$ such that $x_{n_\ell} = \widetilde{x}_{n_\ell}$ and $x_{n_{\ell+1}} \neq \widetilde{x}_{n_{\ell+1}}$, which implies
\begin{equation*}
  \gamma_{n_\ell n_{\ell+1}}(\widetilde{x}_{n_\ell} \widetilde{x}_{n_{\ell+1}} - x_{n_\ell} x_{n_{\ell+1}}) = 2\gamma_{n_\ell n_{\ell+1}}\widetilde{x}_{n_\ell} \widetilde{x}_{n_{\ell+1}} > 0,
\end{equation*}
and therefore $E_{G,\kappa}(\widetilde{x}) < E_{G,\kappa}(x)$.

\end{proof}


\section{Non-monotone potentials} \label{sec:nonmonotone}

\subsection{Overview}

It was shown in~\cite{Berglund.Fernandez.Gentz.07.1} that in the case where the underlying graph is a ring, and all of the connections are friendly, that increasing the interaction can only decrease the number of minima, i.e. the number of minima of~\eqref{eq:defofE} is a nonincreasing function of $\kappa$.  In this section, we study pairs $(W,G)$ that do not have this monotonicity property.  We will show that such pairs exist, and, in fact, we can construct pairs that ave arbitrarily more minima for some positive $\kappa$ than for $\kappa=0$.

\begin{defn} \label{def:monotone}
For any pair $(W,G)$ define $E_{G,W,\kappa}(x)$ as above, and let $m_{W,G}(\kappa)$ to be its number of local minima. We say that the pair $(W,G)$ is monotone (resp.~non-monotone) if the function $m_{W,G}$ is monotone non-increasing (resp. ever increases as a function of $\kappa$).  

We also define the quantities
\begin{equation*}
  C(G,W) = \sup_{\kappa \ge 0} \{ m(W,G,\kappa) - m(W,G,0) \}, \quad C(W) = \inf_{G \in \mathcal{G}} C(G,W).
\end{equation*}
Clearly $C(G,W) >0$ iff the pair $G,W$ is nonmonotone, and $C(W)$ is a measure of the ``minimal nonmonotonicity'' that comes from a particular one-dimensional potential.
\end{defn}

\subsection{Main Results}

We have several results that we prove in this section.  The main result, Theorem~\ref{thm:nm}, shows that we can construct a potential $W$ such that the pair $(G,W)$ is always nonmonotone, and, in fact, we can get a uniform bound on how many new minima are created as we increase the coupling.  This theorem has a few corollaries that allow us to bound the size of the coupling in the graph $G$ that would lead to nonmonotonicites.  

\begin{defn}
$\mathcal{W}_s$ denote the subset of $\mathcal{W}$ consisting of potentials that have $s$ inflection points between $x=0$ and $x=m$.  See Figure~\ref{fig:Ws} for an example in $\mathcal{W}_{7}$ after mollification.
\end{defn}


\begin{defn} \label{def:norm}
Define the functional $\norm{\cdot}:\R^{N \times N} \rightarrow \R$ by $\norm{M} = \max_{1 \le i \le n} \sum_{j \ne i} |M_{ij}|$.
\end{defn}

\begin{remark}
It is not hard to show that this quantity has the properties:
\be
\ii $\norm{L(G)} = 0$ if and only if  $\max_{1 \le i \le n} \sum_{j \ne i} |\gamma_{ij}| = 0$
\ii $\| \alpha L(G) \| =  | \alpha | \| L(G) \|$
\ii $\| L^1(G) + L^2(G) \| \le  \| L^1(G) \| + \| L^2(G) \|$.
\ee
Thus $\norm{\cdot}$ is a norm on the space of all zero-row-sum matrices (but is only a seminorm on matrices). In fact it is comparable to the norm $\norm{\cdot}_{1 \rightarrow 1}$.
\end{remark}

\begin{lem}\label{lem:Gershgorin}
  If $\lambda$ is an eigenvalue of $L(G)$, then $\av\lambda \le 2\norm{L(G)}$.
\end{lem}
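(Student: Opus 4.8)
The plan is to apply the Gershgorin circle theorem and then exploit the zero-row-sum structure of the graph Laplacian to absorb the diagonal contribution into the off-diagonal one. First I would record the explicit form of the entries: by~\eqref{eq:defofL} the off-diagonal entries of $L(G)$ are $(L(G))_{ij} = \gamma_{ij}$ for $i \neq j$, and the diagonal entry is $(L(G))_{ii} = -\sum_{k \neq i}\gamma_{ik}$. In particular, by Definition~\ref{def:norm},
\begin{equation*}
  \norm{L(G)} = \max_{1 \le i \le n} \sum_{j \ne i} |\gamma_{ij}| =: \max_{1\le i \le n} R_i,
\end{equation*}
where $R_i := \sum_{j \ne i}|\gamma_{ij}|$ is the $i$-th deleted absolute row sum.

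Next I would invoke Gershgorin's theorem: every eigenvalue $\lambda$ of $L(G)$ lies in at least one of the disks $\{ z \in \C : |z - (L(G))_{ii}| \le R_i\}$. Fix such an index $i$. The crucial observation is that $L(G)$ has zero row sums, so
\begin{equation*}
  |(L(G))_{ii}| = \Bigl| \sum_{k \ne i} \gamma_{ik} \Bigr| \le \sum_{k \ne i} |\gamma_{ik}| = R_i,
\end{equation*}
i.e.\ the center of the Gershgorin disk is itself controlled by its radius. Combining this with the Gershgorin bound via the triangle inequality gives
\begin{equation*}
  |\lambda| \le |(L(G))_{ii}| + R_i \le 2 R_i \le 2 \norm{L(G)},
\end{equation*}
which is the claim.

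There is really no hard part here; the estimate is a one-line consequence of Gershgorin once the zero-row-sum property is used. If anything, the only thing worth remarking on is where the factor of $2$ comes from: a Gershgorin disk for a general matrix need not be centered at the origin, and it is precisely the Laplacian constraint $\sum_j (L(G))_{ij} = 0$ that pins the center within distance $R_i$ of $0$, doubling the radius bound rather than leaving it uncontrolled. (Equivalently, one could note that $2\norm{L(G)}$ dominates the maximum absolute column sum of $L(G)$, which is an induced operator norm and hence an upper bound for the spectral radius; this gives the same conclusion.)
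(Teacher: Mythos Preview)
Your proof is correct and follows essentially the same route as the paper: apply Gershgorin's theorem and then use the zero-row-sum property of $L(G)$ to bound the diagonal center $|(L(G))_{ii}|$ by the off-diagonal row sum $R_i$, yielding $|\lambda|\le 2R_i \le 2\norm{L(G)}$. The paper states this more tersely (writing the Gershgorin intervals and asserting the containment in $[-2\norm{L(G)},2\norm{L(G)}]$ without isolating the triangle-inequality step), but the argument is the same.
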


\begin{proof}
  Gershgorin's theorem states that \\
$$
\lambda \in \bigcup_{i=1}^N \left[\gamma_{ii}-\sum_{j \ne i} |\gamma_{ij}|,\gamma_{ii}+ \sum_{j \ne i} |\gamma_{ij}| \right], 
$$
and since
$$
\bigcup_{i=1}^N \left[\gamma_{ii}-\sum_{j \ne i} |\gamma_{ij}|,\gamma_{ii}+ \sum_{j \ne i} |\gamma_{ij}| \right]  \subset \left[-2 \max_{1 \le i \le n} \sum_{j \ne i} |\gamma_{ij}|, 2 \max_{1 \le i \le n} \sum_{j \ne i} |\gamma_{ij}| \right], 
$$
we have that $\av\lambda \le 2\norm{L(G)}$.
\end{proof}


\begin{lem} \label{lower bound}
Fix $0 < \ell < m < r$ and $M > 0$, and let $W \in \mathcal{W}$ be such that $W'' \ge M$ on $[\ell,r]$. Further let $G \in \mathcal{G}$ and choose $\kappa$ so that
\begin{align*}
\kappa \| L(G) \| = \frac{M \min \{ r-m,m-\ell \}}{2r}.
\end{align*}
Then there exists continuous functions $f_\p : [0,\kappa] \rightarrow ([-r,-\ell] \cup [\ell,r])^n$ for $\p \in \{-m,m\}^n$ satisfying the following conditions:
\begin{enumerate}
\item $f_\p(0) = \p$,
\item $\nabla E(f_\p(\kappa'),\kappa') = 0$ for all $\kappa' \in [0,\kappa]$,
\item $\nabla^2 E(f_\p(\kappa'),\kappa')$ is positive definite for all $\kappa' \in [0,\kappa]$.
\end{enumerate}
\end{lem}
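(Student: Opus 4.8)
The plan is to continue each of the $2^n$ equilibria $\p\in\{-m,m\}^n$ of $E(\cdot,0)$ by the implicit function theorem, and to show that the continuation survives on all of $[0,\kappa]$ by trapping it inside the box $R:=([-r,-\ell]\cup[\ell,r])^n$. Write $F(\x,\kappa'):=\nabla E(\x,\kappa')=(W'(x_i))_{i=1}^n-\kappa'\L(G)\x$, so that $\partial_\x F=\nabla^2E=\mathrm{diag}(W''(x_i))-\kappa'\L(G)$ and $\partial_{\kappa'}F=-\L(G)\x$. Set $\delta:=M\min\{r-m,m-\ell\}>0$, so the hypothesis reads $2r\kappa\norm{\L(G)}=\delta$. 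The construction rests on two a priori estimates valid on $R\times[0,\kappa]$.

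\emph{Hessian estimate.} Since $W$ is even, $W''\ge M$ on $[-r,-\ell]\cup[\ell,r]$, so for $\x\in R$ we have $\mathrm{diag}(W''(x_i))\succeq MI$. By Lemma~\ref{lem:Gershgorin} every eigenvalue of $\kappa'\L(G)$ has modulus at most $2\kappa'\norm{\L(G)}\le 2\kappa\norm{\L(G)}=\delta/r<M$, the last inequality because $\min\{r-m,m-\ell\}\le r-m<r$ (as $m>0$). Hence $\nabla^2E(\x,\kappa')\succeq(M-2\kappa'\norm{\L(G)})I\succ0$ for all $\x\in R$, $\kappa'\in[0,\kappa]$. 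Thus $\partial_\x F$ is uniformly invertible throughout $R\times[0,\kappa]$ — so no degenerate bifurcation can occur inside $R$ — and condition (3) of the lemma is automatic once $f_\p$ is produced inside $R$.

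\emph{Location estimate.} Suppose $\x\in R$ and $F(\x,\kappa')=0$. Since each coordinate lies in $[-r,r]$, $\av{(\L(G)\x)_i}=\av{\sum_{j\ne i}\gamma_{ij}(x_i-x_j)}\le 2r\norm{\L(G)}$, hence $\av{W'(x_i)}=\kappa'\av{(\L(G)\x)_i}\le 2r\kappa'\norm{\L(G)}\le\delta$, with strict inequality when $\kappa'<\kappa$ (here $\norm{\L(G)}>0$, since otherwise the defining equation has no solution and the lemma is vacuous). On $[\ell,r]$ the map $W'$ is strictly increasing ($W''\ge M>0$) with $W'(m)=0$, $W'(r)\ge M(r-m)\ge\delta$, and $W'(\ell)\le-M(m-\ell)\le-\delta$; therefore $\{x\in[\ell,r]:\av{W'(x)}<\delta\}$ is an open interval $(a,b)$ with $\ell\le a<m<b\le r$, so $(a,b)\subseteq(\ell,r)$, and by oddness of $W'$ the analogous set in $[-r,-\ell]$ is $(-b,-a)\subseteq(-r,-\ell)$. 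Consequently every zero of $F$ in $R$ at a parameter $\kappa'<\kappa$ actually lies in the open box $\mathrm{int}(R)=\prod_i\big((\ell,r)\cup(-r,-\ell)\big)$, bounded away from $\partial R$.

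\emph{Continuation.} Fix $\p$. Since $F(\p,0)=(W'(\pm m))_i=\0$, $\partial_\x F(\p,0)\succ0$, and $\p\in\mathrm{int}(R)$ (because $\ell<m<r$), the implicit function theorem gives a unique $C^1$ branch $f_\p$ with $f_\p(0)=\p$ and $F(f_\p(\kappa'),\kappa')=0$, which by the location estimate remains in $\mathrm{int}(R)$ as long as $\kappa'<\kappa$; let $[0,\kappa^*)$ be its maximal interval of existence inside $R$. On $R\times[0,\kappa]$ the tangent $\tfrac{d}{d\kappa'}f_\p=-(\partial_\x F)^{-1}\partial_{\kappa'}F$ is uniformly bounded (uniform invertibility of $\partial_\x F$ from the Hessian estimate, and $\partial_{\kappa'}F=-\L(G)\x$ bounded on $R$), so $f_\p$ is Lipschitz and extends continuously to $\kappa^*$ with $f_\p(\kappa^*)\in R$ and $F(f_\p(\kappa^*),\kappa^*)=0$. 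If $\kappa^*<\kappa$, the location estimate forces $f_\p(\kappa^*)\in\mathrm{int}(R)$, and then invertibility of $\partial_\x F$ there lets the implicit function theorem extend the branch past $\kappa^*$ inside $\mathrm{int}(R)$, contradicting maximality; hence $\kappa^*\ge\kappa$. Restricting to $[0,\kappa]$ (extending by continuity to the endpoint when $\kappa^*=\kappa$) yields $f_\p\colon[0,\kappa]\to R$ satisfying (1) and (2), and the Hessian estimate gives (3); the argument applies verbatim to every $\p\in\{-m,m\}^n$. The two a priori estimates are short computations; the step that requires care is exactly this continuation/maximality argument — ruling out that the branch escapes $R$ before $\kappa'=\kappa$ (handled by the location estimate) or terminates at a degenerate interior point (handled by the uniform Hessian bound) — which is the real content of the proof.
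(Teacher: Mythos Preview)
Your proof is correct and follows the same route as the paper's: both use Gershgorin (Lemma~\ref{lem:Gershgorin}) to bound the Hessian from below on $R\times[0,\kappa]$, then invert the equilibrium equation $W'(x_i)=\kappa'(\L(G)\x)_i$ together with $\av{(\L(G)\x)_i}\le 2r\norm{\L(G)}$ to show the branch cannot reach $\partial R$ before $\kappa'=\kappa$, and conclude via the implicit function theorem. Your treatment is in fact more careful than the paper's --- the explicit location estimate plus the maximality/Lipschitz continuation argument makes rigorous what the paper compresses into a single displayed lower bound on $\kappa_\p$ without clearly stating the boundary-hitting hypothesis.
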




\begin{remark}
Essentially the lemma gives a lower bound on how long the local minima of $E$ at $\kappa = 0$ evolve in $\kappa$ according to the implicit function theorem and therefore how long they exist and remain distinct.
\end{remark}

\begin{proof}
By Lemma~\ref{lem:Gershgorin}, all of the eigenvalues of $L(G)$ are in the range $[-2\norm{L(G)},2\norm{L(G)}]$, and 
by assumption, $W''\ge M$.  From this it follows that $\nabla^2 E(x,\kappa)$ is positive definite on $([-r,-\ell] \cup [\ell,r])^n$ if $0 \le \kappa < \frac{M}{2 \| L(G) \|}$.

Fix $\p \in \{-m,m\}^n$.   Then the implicit function theorem gives us a function $f_\p: [0,\kappa) \rightarrow U_{\p}$ for some $\kappa$, where $U_\p$ is some deleted neighborhood of $\p \in \R^n$.

Now since for any $i$
\begin{align*}
\kappa_{\p} := \frac{-W'(f_\p(\kappa)_i)}{(L(G) f_\p(\kappa))_i} 
= \frac{|W'(f_\p(\kappa)_i)|}{|(L(G) f_\p(\kappa))_i|} 
\ge \frac{\min \{ |W'(\ell)|,|W'(r)| \}}{2r \sum_{j \ne i} |\gamma_{ij}|} 
\ge \frac{M \min \{r-m,m-\ell \}}{2r \| L(G) \|} > 0
\end{align*}
we can extend our implicit function to $f_\p: [0,\kappa_{\p}] \rightarrow ([-r,-\ell] \cup [\ell,r])^n$.
Now since $\p$ was arbitrary in $\{-m,m\}^n$, we obtain the desired result.
\end{proof}



\begin{thm} \label{thm:nm}
For any $s \ge 0$ there exists a potential $W \in \mathcal{W}_{2s+1}$ such that $C(W) \ge 2s$.
\end{thm}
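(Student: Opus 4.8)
The plan is to construct $W \in \mathcal{W}_{2s+1}$ whose profile on $[0,m]$ contains $s$ ``bumps'' — short intervals where $W''$ is large and positive — separated by regions where $W$ is nearly linear, and then to exhibit, for every connected graph $G$, a value of $\kappa>0$ at which $E_{W,G,\kappa}$ has at least $2s$ more local minima than at $\kappa=0$. The intuition is the following: at $\kappa=0$ each coordinate sits at $\pm m$, giving $2^n$ minima; as $\kappa$ increases the coordinates are pulled together, and as a configuration slides ``inward'' it will, thanks to the engineered convex bumps, get temporarily trapped near each bump, creating new local minima that did not exist at $\kappa=0$. Making this quantitative for \emph{every} $G$ is exactly what Lemma~\ref{lower bound} is for: it guarantees that on an interval $[\ell,r]$ where $W'' \ge M$, all $2^n$ of the $\kappa=0$ minima persist (and stay in $([-r,-\ell]\cup[\ell,r])^n$, hence remain distinct) up to $\kappa$ with $\kappa\|L(G)\| = \frac{M\min\{r-m,m-\ell\}}{2r}$.

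The key steps, in order, are as follows. First I would fix the shape of $W$: choose $s$ disjoint subintervals $[\ell_k,r_k]\subset(0,m)$ and a parameter $M_k$ on each, requiring $W''\ge M_k$ there, with $W'<0$ throughout $(0,m)$ and $W$ even and coercive so that $W\in\mathcal{W}$; counting inflection points (entering and leaving each convex bump, plus the inflection forced near $x=0$ by evenness and $W'(0^+)>0$ — wait, $W'<0$ on $(0,m)$, so the single extra inflection comes from the behavior as one passes from the convex bump structure back to concave regions) gives $2s+1$ inflection points, so $W\in\mathcal{W}_{2s+1}$. Second, given an arbitrary connected $G$, I would apply Lemma~\ref{lower bound} \emph{on each bump}: for the $k$-th bump it produces continuous families $f_\p:[0,\kappa_k]\to([-r_k,-\ell_k]\cup[\ell_k,r_k])^n$ of nondegenerate minima for all $\p\in\{-m,m\}^n$, so at $\kappa=\kappa_k$ there are at least $2^n$ distinct nondegenerate local minima whose coordinates all lie in the $k$-th shell. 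Third — and this is the crux — I would argue that for a suitable choice of the $M_k$'s (made large enough, or the bumps placed appropriately relative to $m$), the ``stopping value'' $\kappa_k$ associated with bump $k$ can be made to \emph{coincide with a single $\kappa^\ast$} for all $k$, or more robustly, that there is one $\kappa^\ast$ at which several shells simultaneously contain $2^n$ minima each; since minima localized in disjoint shells $([-r_k,-\ell_k]\cup[\ell_k,r_k])^n$ are automatically distinct from each other, this yields at least $s\cdot 2^n \ge 2s$ (in fact vastly more than $2s$) local minima at $\kappa^\ast$, versus $2^n$ at $\kappa=0$, so $m_{W,G}(\kappa^\ast)-m_{W,G}(0)\ge 2s$. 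Taking the infimum over $G$ gives $C(W)\ge 2s$. A cleaner route to the same end, avoiding the need to synchronize the $\kappa_k$, is to note that the trajectory $f_\p(\cdot)$ for the full potential moves each coordinate monotonically inward as $\kappa$ grows, so it must pass through all $s$ shells in succession; by choosing the $M_k$ large we force a genuine \emph{new} branch of minima to nucleate at the entrance to each shell (a saddle-node creating a pair, one of which is a minimum trapped on the bump), and at any $\kappa$ lying beyond the $s$-th nucleation threshold but before any bump is destroyed we have accumulated at least $2s$ extra minima.

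The main obstacle I anticipate is the third step: controlling the bookkeeping of \emph{which} new minima exist simultaneously. Lemma~\ref{lower bound} only gives persistence of the original $2^n$ minima within a single shell and only up to a $\kappa$ that depends on $\|L(G)\|$; it does not by itself say the original minima and the newly nucleated ones coexist, nor that the window $[0,\kappa_k]$ for different $k$ overlap in $\kappa$. Handling this requires choosing the $M_k$ (equivalently, how sharp each bump is) in the right order — largest $M_k$ on the bump nearest $m$, so that $\kappa_k=\frac{M_k\min\{r_k-m,\,m-\ell_k\}}{2r_k\|L(G)\|}$ can be arranged decreasing in $k$ fast enough that all the intervals $[0,\kappa_k]$ contain a common point $\kappa^\ast>0$ — and then verifying that at $\kappa^\ast$ the $s$ shells really are mutually disjoint (which is immediate from $[\ell_k,r_k]$ disjoint) so the $s$ packets of $2^n$ minima cannot be double-counted. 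The coercivity and $C^2$-smoothness of $W$ across the bumps, and the inflection-point count, are routine to arrange (one can build $W''$ as an explicit nonnegative-then-negative piecewise-smooth function and mollify, exactly as alluded to in the definition of $\mathcal{W}_s$ and Figure~\ref{fig:Ws}), so I would not dwell on them.
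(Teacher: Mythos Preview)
Your proposal has a genuine gap in the second step. Lemma~\ref{lower bound} is stated for an interval $[\ell,r]$ with $\ell<m<r$, i.e.\ one that \emph{contains} the well at $m$; it tracks the $2^n$ minima starting from $\p\in\{-m,m\}^n$ at $\kappa=0$ and shows they remain inside $([-r,-\ell]\cup[\ell,r])^n$. It cannot be invoked on a bump $[\ell_k,r_k]\subset(0,m)$ that does not contain $m$: there is nothing at $\kappa=0$ to continue from inside such a bump, so the lemma yields no family $f_\p$ there. Consequently the claim that at some $\kappa^\ast$ each of the $s$ disjoint shells carries its own packet of $2^n$ minima is unfounded (indeed impossible, since at $\kappa=0$ there are exactly $2^n$ minima, all at $\{\pm m\}^n$, and the implicit-function continuation is single-valued). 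Your ``cleaner route'' via saddle-node nucleation at the entrance to each shell is a reasonable heuristic picture but is not substantiated by any tool available here; in particular, monotone inward drift of coordinates is not available once $G$ has edges of both signs.

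The paper's argument is structurally different. It applies Lemma~\ref{lower bound} \emph{once}, on a single interval $[\ell,r]$ around $m$, to guarantee that all $2^n$ original minima persist up to the specific $\kappa=\kappa(G)$ given there. The extra minima are then produced not by continuation at all, but by a direct Poincar\'e--Miranda argument: for each inner bump $[\ell',r']\subset(0,\ell)$ one picks the vertex $i$ realizing $\|L(G)\|$, builds a rectangle $R$ in which $x_i\in[\ell',r']$ while every other coordinate $x_j$ lies in $[-r,-\ell]$ or $[\ell,r]$ according to the sign of $\gamma_{ij}$, and imposes explicit quantitative bounds on $W'(\ell'),W'(r'),W'(\ell),W'(r)$ tuned so that each component of $\nabla E$ changes sign across the corresponding pair of opposite faces of $R$. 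Poincar\'e--Miranda then gives a critical point $x_0\in R$, which is a local minimum because $W''\ge M$ on both $[\ell',r']$ and $[\ell,r]$; its reflection $-x_0$ furnishes a second. The ingredient you are missing is precisely this: the new minimum has \emph{one} coordinate sitting on the bump while all the others stay near $\pm m$, and it is located by a boundary-sign (topological) argument rather than by following a branch from $\kappa=0$.
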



\begin{proof}
We begin with the case $s = 2$. Fix $0 < \ell' < r' < \ell < m < r$ and $M > 0$. Let $E(\kappa, \x) := E_{W,G,\kappa}(\x)$, for any $\kappa$ and $\x$.  Choose $W \in \mathcal{W}_5$ satisfying $W'' \ge M$ on $[\ell',r'] \cup [\ell,r]$ and
\begin{align} \label{ineq: non-monotone potentials}
W'(\ell') & < -\frac{M(r + r') \min \{ r-m,m-\ell \}}{2r}, \quad W'(\ell) < -M \min \{r-m,m-\ell \},
\\
W'(r') & > -\frac{M(\ell - r') \min \{ r-m,m-\ell \}}{2r}, \quad W'(r) > M \min \{r-m,m-\ell \}.
\end{align}
Fix $G \in \mathcal{G}$ and choose $\kappa$ according to Lemma $\ref{lower bound}$. We will show that $E(\kappa, \x)$ 
has a non-zero fixed point $x_0 \notin ([-r,-\ell] \cup [\ell,r])^n$ and therefore conclude that $E(\kappa, \x)$ has at least $2^n+2$ fixed points, namely, each $f_\p(\kappa)$ and $\pm x_0$. This of course implies that $\sup_{\kappa \ge 0} \{ m(W,G,\kappa) - m(W,G,0) \} \ge 2$ which proves the result for $s = 2$.

To find $x_0$, choose a vertex $i$ such that $\| L(G) \| = \sum_{j \ne i} |\gamma_{ij}|$ and let $j$ denote a generic vertex not equal to $i$. Define $R$ to be the rectangular region consisting of all $x \in \R^n$ such that $x_i \in [\ell',r']$, $x_j \in [-r,-\ell]$ if $\gamma_{ij} \ge 0$, and $x_j \in [\ell,r]$ if $\gamma_{ij} < 0$. We will find our $x_0$ in $R$. To do this we first note that
\begin{align*}
\frac{M(\ell - r') \min \{ r-m,m-\ell \}}{2r} & \le \kappa (L(G) x)_i \le \frac{M(r + r') \min \{ r-m,m-\ell \}}{2r},
\\
-M \min \{ r-m,m-\ell \} & \le \kappa(L(G) x)_j \le M \min \{ r-m,m-\ell \}, 
\end{align*}
for all $x \in R$ and therefore conclude that
\begin{align*}
\nabla E(x,\kappa)_i \biggr\rvert_{x_i = \ell'} < 0 < \nabla E(x,\kappa)_i \biggr\rvert_{x_i = r'} \quad \text{and} \quad \nabla E(x,\kappa)_j \biggr\rvert_{x_j = \ell} < 0 < \nabla E(x,\kappa)_j \biggr\rvert_{x_j = r} 
\end{align*}
for all $x$ in the indicated faces of $R$. Therefore by the Poincare--Miranda theorem~\cite{mawhin2013variations}
there exists a critical point $x_0$ in $R$. Finally since we have the same lower bound on $W''$ as in Lemma $\ref{lower bound}$ we see that $\nabla^2 E(x,\kappa)$ is positive definite for all $x \in R$ and therefore conclude that our $x_0$ is in fact a local minimum.

The case $s > 2$ is solved in a similar way by choosing numbers $0 < \ell_1' < r_1' < \dots < \ell_{s-1}' < r_{s-1}' < \ell < m < r$ and imposing the same restrictions to $W'$ as in the $s = 2$ case for each pair $\ell_t'$ and $r_t'$ for $t \in \{1,\dots,s-1\}$ restrictions on $W'$ at these points. All of the resulting fixed points are distinct since the $i$th component lies in the interval $(\ell_t',r_t')$ which is disjoint from the others.
\end{proof}

\subsection{Examples}


\begin{example}
In this example we construct an explicit example of a potential whose existence is guaranteed by Theorem $\ref{thm:nm}$ . We do this for $s = 2$ and therefore we construct a potential $W \in \mathcal{W}_5$ for which $C(W) \ge 4$.  For simplicity choose $\ell_1' = 0 < r_1' = 1 = \ell_2' < r_2'  < \ell = 3 < m = 4 < r = 5$ and $M = 1$. If we define $S(x) = \frac{1}{2} x^2 - \frac{12}{11} x$, then $S(0) = 0$ and $S''(x) = 1 = M$ and
\begin{gather*}
S'(0) = -\frac{12}{11} < -\frac{2}{3} \le -\frac{5+r'}{10} = -\frac{M(r+r') \min \{r-m,m-\ell \}}{2r},
\\
S'(1) = -\frac{1}{11} > -\frac{1}{10} \ge - \frac{3 - r'}{10} = -\frac{M(\ell-r') \min \{r-m,m-\ell \}}{2r}, 
\end{gather*}
for $r' \in \{1,2\}$. If we further define $T(x) = x^2 - 4$, then $T(-2) = 0$ and $T''(x) = 2 \ge 1 = M$ and
\begin{align*}
T'(-1) = -2 < -1 = -M \min \{r-m,m-\ell \}, \quad T'(1) = 2 > 1 = M \min \{r-m,m-\ell \}.
\end{align*}
Thus we can define our potential for non-negative arguments by
\begin{align} \label{W_7 example}
W(x) =
\begin{cases}
S(x) & \mbox{if $0 \le x < 1$,}
\\
S(1) + S(x-1) & \mbox{if $1 \le x < 2$,}
\\
2S(1) + B(x-4) & \mbox{if $x \ge 2$,}
\end{cases}
\end{align}
and extend to negative values by symmetry.  For simplicity of definition, this potential is not smooth;  it is not hard to see that we could mollify the corners without changing any of the arguments below.

By construction the intervals $(-2,-1)$, $(-1,0)$, $(0,1)$, and $(1,2)$ each contribute at least one new fixed point at $\kappa = \frac{1}{10 \| L(G) \|}$ which results in an increase of four fixed points as desired. We give a plot of our potential in the figure below. Furthermore let us choose $G$ to be the complete graph on three vertices with unit edge weights. Then $\|L(G)\| = 1$ and we are guaranteed to have four new fixed points at $\kappa = \frac{1}{10}$. We give a table of the number of minima of $E$ for our potential and graph at certain values of $\kappa$ below.

\begin{figure}
\centering
\begin{subfigure}{.5\textwidth}
  \centering
  \includegraphics[width=1\linewidth]{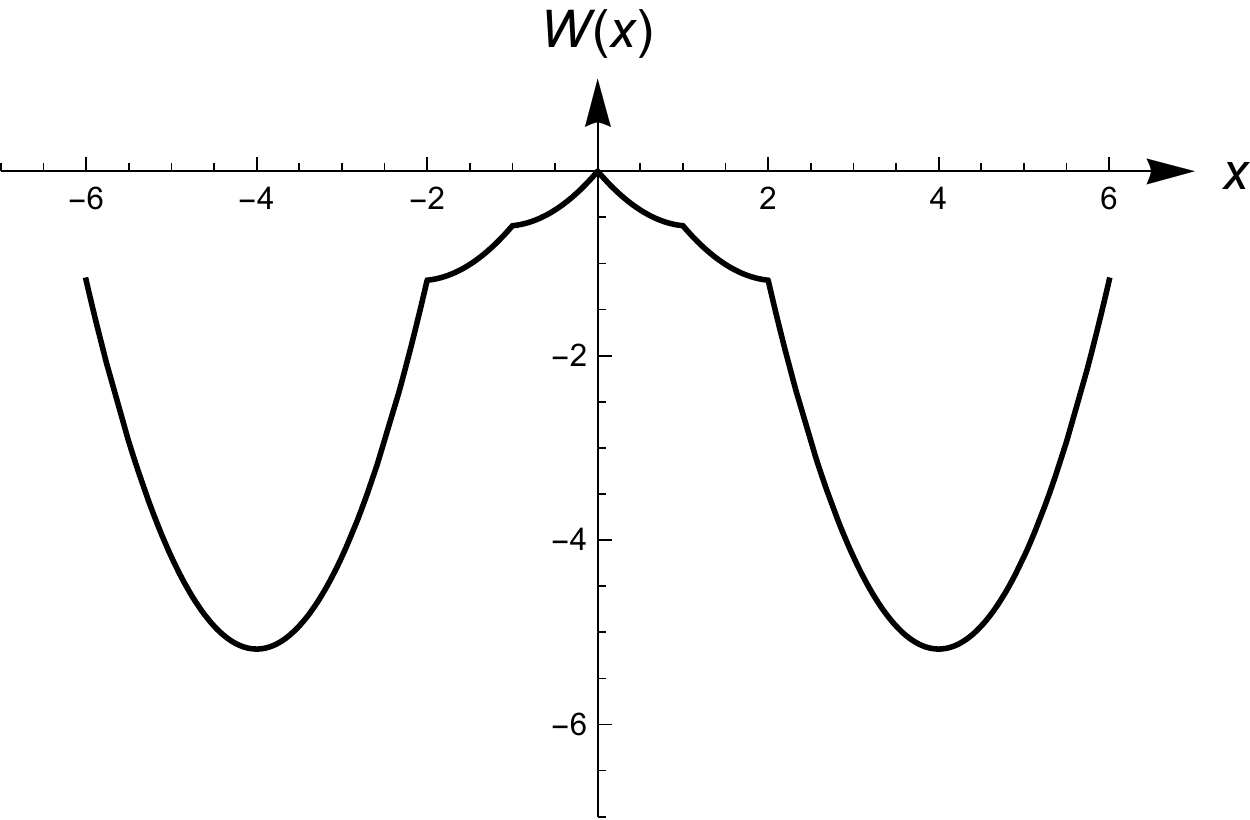}
\end{subfigure}%
\begin{subfigure}{.5\textwidth}
\centering
  \begin{tabular}{| c | c |}
\hline
$\kappa$ & $m_{W,G}(\kappa)$ \\
\hline
0 & 8 \\
\hline
0.02 & 63 \\
\hline
0.04 & 98 \\
\hline
0.06 & 98 \\
\hline
0.08 & 97 \\
\hline
0.1 & 90 \\
\hline
0.12 & 77 \\
\hline
0.14 & 65 \\
\hline
0.16 & 65 \\
\hline
0.18 & 56 \\
\hline
0.2 & 50 \\
\hline
\end{tabular}
\end{subfigure}
\caption{A plot of the potential $W$ defined in~\ref{W_7 example}, and a table of the function $m_{W,G}$ for the potential $W$ and graph on three vertices with unit edge weights edge weights. The function $m_{W,G}$ appears to have a local maximum around 0.05.}
\label{fig:Ws}
\end{figure}

We see that we obtain over one hundred more new minima. Also we notice that $m(W,G,\kappa)$ appears to peak before we reach the value of $\kappa$ used in the proof of Theorem $\ref{thm:nm}$.

We obtained numerical estimates for the number of minima at each value of $\kappa$ by using a Monte Carlo method evolving each point under our gradient flow until we determine that we are sufficiently close to a minimum. Note that our number are therefore lower bounds for the actual values of $m(W,G,\kappa)$.
\end{example}

\begin{example}
In this example we show that we may actually achieve way more minima than predicted by our theorem. Let $\kappa = 1$, $\epsilon = .01$, and $W$ be the classical W-potential on the set 
$(-\infty,-1) \cup (-1/4+2\epsilon, 1/4-2\epsilon) \cup (1,\infty)$, decreasing and smooth on $(1/4-2\epsilon,1)$, increasing and smooth on $(-1,-1/4+2\epsilon)$, and even.  Additionally, suppose that 
\begin{align*}
W'(x) =
\begin{cases}
\epsilon	&	\mbox{for $x = -1/4,-1/2,$ and $-3/4,$}\\
10		&	\mbox{for $x = -1/4+\epsilon,-1/2+\epsilon,-3/4+\epsilon,$ and $-1+\epsilon$,}\\
-10		&	\mbox{for $x = 1/4-\epsilon,1/2-\epsilon,3/4-\epsilon,$ and $1-\epsilon$,}\\
-\epsilon	&	\mbox{for $x = 1/4,1/2,$ and $3/4,$.}\\
\end{cases}
\end{align*}
Now consider the graph with two nodes and edge of weight $1$. Let $L$ be the corresponding graph Laplacian. Then the gradient of the energy is 
\begin{equation*}
\nabla E_{W,G,\kappa}(x) = \begin{pmatrix} W'(x_1)+x_1-x_2 \\ W'(x_2)-x_1+x_2 \end{pmatrix}
\end{equation*}
Let $f_1(x_1,x_2) = W'(x_1)+x_1-x_2$ and $f_2(x_1,x_2) = W'(x_2)-x_1+x_2$.  Consider the sets $A_n = [-n/4, -n/4+\epsilon]$ and $B_n = [n/4+\epsilon, n/4]$ for $n=1,2,3,4$.  There are 16 cartesian products of the form $A_i \times B_j$.

Consider $(x_1,x_2) \in A_i \times B_j$ for some $1 \leq i,j \leq 4$.  Then certainly $x_1-x_2 \in [-2,-2/4+2\epsilon]$ and $-x_1+x_2 \in [2/4-2\epsilon,2]$.  So we have that 
\begin{align*}
f_1(x_1,x_2) = 
\begin{cases}
W'(x_1)+x_1-x_2 \leq -2/4+3\epsilon<0	&	\mbox{for $x_1 = -1/4,-1/2,-3/4,$ and $-1$,}\\
W'(x_1)+x_1-x_2 \geq 10-2 > 0	&	\mbox{for $x_1 = -1/4+\epsilon,-1/2+\epsilon,-3/4+\epsilon,$ and $-1+\epsilon$,}\\
\end{cases}
\end{align*}
and
\begin{align*}
f_2(x_1,x_2) = 
\begin{cases}
W'(x_2)-x_1+x_2 \geq 2/4-3\epsilon>0	&	\mbox{for $x_2 = 1/4,1/2,3/4,$ and $1$,}\\
W'(x_2)-x_1+x_2 \leq -10+2 < 0	&	\mbox{for $x_2 = 1/4-\epsilon,1/2-\epsilon,3/4-\epsilon,$ and $-1+\epsilon$.}\\
\end{cases}
\end{align*}
Applying the Poincare--Miranda theorem we get 16 local extrema, one for each of the 16 sets, of the form $x_1 < 0 < x_2$, by symmetry there are another 16 of the form $x_2 < 0 < x_1$.  It is left to the reader that these extrema happen in regions with a positive Hessian.  There are an additional 2 local minima at $(1,1)$ and $(-1,-1)$, giving a total number of 34 minima at $\kappa = 1$. \end{example}

\begin{example}
The main drivers in the increase in minima in the proofs above is due to the inflection points in the potential, which gave rise to ``shelves'' that would separate out the different points.  In this example, we show that these shelves are useful but not necessary for nonmonotonicity.

Let $\kappa = 0.1$. Let $W$ be a smooth, even function with only two inflection points so that $W(x) = (|x| - 1)^4$ on $(-\infty,-1/2] \cup [1/2,\infty)$.   Now consider the graph with two nodes and edge of weight $1$. Let $L$ be the corresponding graph Laplacian. Then the gradient of the energy 
\begin{equation*}
\nabla E_{W,G,\kappa}(x) = \begin{pmatrix} W'(x_1)-.1x_1+.1x_2 \\ W'(x_2)+.1x_1-.1x_2 \end{pmatrix}
\end{equation*}
Let $f_1(x_1,x_2) = W'(x_1)-.1x_1+.1x_2$ and $f_2(x_1,x_2) = W'(x_2)+.1x_1-.1x_2$.  We have that
\begin{align*}
f_1(x_1,x_2) = 
\begin{cases}
W'(x_1)-.1x_1+.1x_2 < 0-.1-.1 < 0 	&	\mbox{for $x_1 = 1, x_2 \in (-2,-1)$}\\
W'(x_1)-.1x_1+.1x_2 > 4-.2-.2 > 0	&	\mbox{for $x_1 = 2, x_2 \in (-2,-1)$}\\
\end{cases}
\end{align*}
and
\begin{align*}
f_2(x_1,x_2) = 
\begin{cases}
W'(x_2)+.1x_1-.1x_2 < -4+.2+.2 < 0	&	\mbox{for $x_2 = -2, x_1 \in (1,2)$}\\
W'(x_2)+.1x_1-.1x_2 > 0+.1+.1 > 0	&	\mbox{for $x_2 = -1, x_1 \in (1,2)$.}\\
\end{cases}
\end{align*}

By the Poincare--Miranda theorem we have an extrema in the square $(-2,-1) \times (1,2)$, and by symmetry a second extrema.  Also we have

\begin{align*}
f_1(x_1,x_2) = 
\begin{cases}
W'(x_1)-.1x_1+.1x_2 < -1/2-.05-.1 < 0 	&	\mbox{for $x_1 = 1/2, x_2 \in (1,2)$}\\
W'(x_1)-.1x_1+.1x_2 > 0-.1+.1 > 0		&	\mbox{for $x_1 = 1, x_2 \in (1,2)$}\\
\end{cases}
\end{align*}
and
\begin{align*}
f_2(x_1,x_2) = 
\begin{cases}
W'(x_2)+.1x_1-.1x_2 < 0+.1-.1 = 0	&	\mbox{for $x_2 = 1, x_1 \in (1/2,1)$}\\
W'(x_2)+.1x_1-.1x_2 > 4+.05-.1 > 0	&	\mbox{for $x_2 = 2, x_1 \in (1/2,1)$.}\\
\end{cases}
\end{align*}
By the Poincare--Miranda theorem we have a extrema in the square $(1/2,1) \times (1,2)$.  By symmetry there are three more extrema in $(1,2) \times (1/2,1)$, $(-1,-1/2) \times (-2,-1)$, and $(-2,-1) \times (-1,-1/2)$.  It is left to the reader that these extrema happen in regions with a positive Hessian.  Thus we have a total of 6 minima at $\kappa = 0.1$.  Based on this example, one may conjecture that for any W-potential there exists a graph with negative edge weights so that minima increase locally.  
\end{example}


\begin{example}
This example gives an increase in minima with only 2 inflection points like the last example.  
This example will not require negative edge weights.
Let $\kappa = 3/40$ and $\epsilon = 0.01$.  
Let $W$ be a smooth, even function with only two inflection points so that $W$ is the classical potential on $(-\infty,-1] \cup [1,\infty)$, is smooth, and has second derivative $W''(x) > 0$ on $(-1,-1+\epsilon) \cup (1-\epsilon,1)$.  Also suppose the following about $W'$,

\begin{align*}
W'(x) =
\begin{cases}
10		&	\mbox{for $x \in [-1 + \epsilon,-1/2 - \epsilon]$,}\\
1/10		&	\mbox{for $x \in [-1/2 + \epsilon,-\epsilon]$,}\\
-1/10		&	\mbox{for $x \in [\epsilon,1/2 - \epsilon]$,}\\
-10		&	\mbox{for $x \in [1/2 + \epsilon,1 - \epsilon]$,}\\
\end{cases}
\end{align*}

Now consider the graph with two nodes and edge of weight $1$. Let $L$ be the corresponding graph Laplacian. Then the gradient of the energy 
\begin{equation*}
\nabla E_{W,G,\kappa}(x) = \begin{pmatrix} W'(x_1)+\frac{3}{40}x_1-\frac{3}{40}x_2 \\ W'(x_2)-\frac{3}{40}x_1+\frac{3}{40}x_2 \end{pmatrix}
\end{equation*}
Let $f_1(x_1,x_2) = W'(x_1)+\frac{3}{40}x_1-\frac{3}{40}x_2$ and $f_2(x_1,x_2) = W'(x_2)-\frac{3}{40}x_1+\frac{3}{40}x_2$.  We have that 

\begin{align*}
f_1(x_1,x_2) = 
\begin{cases}
W'(x_1)+\frac{3}{40}x_1-\frac{3}{40}x_2 < 0+\frac{3}{40}(-1-\epsilon) < 0 	&	\mbox{for $x_1 = -1, x_2 \in (\epsilon,1/2-\epsilon)$}\\
W'(x_1)+\frac{3}{40}x_1-\frac{3}{40}x_2 > 10+\frac{3}{40}(-1+\epsilon-(1/2-\epsilon)) > 0	&	\mbox{for $x_1 = -1+\epsilon, x_2 \in (\epsilon,1/2-\epsilon)$}\\
\end{cases}
\end{align*}
and
\begin{align*}
f_2(x_1,x_2) = 
\begin{cases}
W'(x_2)-\frac{3}{40}x_1+\frac{3}{40}x_2 < -1/10+\frac{3}{40}(-(-1)+\epsilon) < 0	&	\mbox{for $x_2 = \epsilon, x_1 \in (-1,-1+\epsilon)$}\\
W'(x_2)-\frac{3}{40}x_1+\frac{3}{40}x_2 > -1/10+\frac{3}{40}(-(-1+\epsilon)+1/2-\epsilon) > 0	&	\mbox{for $x_2 = 1/2-\epsilon, x_1 \in (-1,-1+\epsilon)$}\\
\end{cases}
\end{align*}

By the Poincare--Miranda theorem we have an extrema in the square $(-1,-1+\epsilon) \times (\epsilon,1/2-\epsilon)$, and by symmetry a second extrema.  Now for any point in the open set $(-1,-1+\epsilon) \times (\epsilon,1/2-\epsilon)$ there exists a $\delta > 0$ so that
\begin{align*}
D^2E \geq 
\begin{pmatrix}
\delta && 0 \\
0 && 0 \\
\end{pmatrix}
+
\begin{pmatrix}
1 && -1 \\
-1 && 1 \\
\end{pmatrix}
=
\begin{pmatrix}
1 + \delta && -1 \\
-1 && 1 \\
\end{pmatrix} 
\end{align*}
which is positive.

Now we also have that
\begin{align*}
f_1(x_1,x_2) = 
\begin{cases}
W'(x_1)+\frac{3}{40}x_1-\frac{3}{40}x_2 < 0+\frac{3}{40}(-1-\epsilon) < 0 	&	\mbox{for $x_1 = -1, x_2 \in (1-\epsilon,1)$}\\
W'(x_1)+\frac{3}{40}x_1-\frac{3}{40}x_2 > 10+\frac{3}{40}(-1+\epsilon-(1/2-\epsilon)) > 0	&	\mbox{for $x_1 = -1+\epsilon, x_2 \in (1-\epsilon,1)$}.\\
\end{cases}
\end{align*}
By the Poincare--Miranda theorem we have a extrema in the square $(-1,1+\epsilon) \times (1-\epsilon,1)$.  By symmetry there is another extrema.  Here the Hessian is easier to bound so it is left to the reader.  There are 2 more minima at $(1,1)$ and $(-1,-1)$.  Thus there are at least 6 minima at $\kappa = 3/40$.  This example shows that minima may increase locally on some W-potentials with graphs of all positive edge weights.
\end{example}


\section{Conclusions}\label{sec:outtro}

There were two directions explored in this paper:  In Section~\ref{sec:global} we considered the global mimima of the energy functional whenever the graph topology is balanced, and in Section~\ref{sec:nonmonotone} we studied the nonmonotonicity of the number of minima.  

In Section~\ref{sec:global} we only considered a particular class of graphs that had a natural structure that led to our being able to describe the global minimum.  Note also that the configuration which globally minimized the energy was independent of the coupling strength $\kappa$ (although of course its energy changes as $\kappa$ changes).  We conjecture that this property is held (with some trivial exceptions) only by balanced graphs, i.e. if a graph is not balanced, then the minimum-energy configuration changes as a function of $\kappa$.  

The results of Section~\ref{sec:nonmonotone} are a bit technical, but they show a surprising fact, if we consider the thermalization of such potentials.  For example, we could add small white noise to any of these ODEs, and we know that all of the (local) minima identified above now become metastable.  An  observer who could only observe the nonequilibrium behavior of our potentials would not be able to detect the ``shelves'' in the one-dimensional potentials, but these shelves play a huge role when these potentials are coupled together.


\bibliography{social}

\newcommand{\etalchar}[1]{$^{#1}$}
\providecommand{\bysame}{\leavevmode\hbox to3em{\hrulefill}\thinspace}
\providecommand{\MR}{\relax\ifhmode\unskip\space\fi MR }
\providecommand{\MRhref}[2]{%
  \href{http://www.ams.org/mathscinet-getitem?mr=#1}{#2}
}
\providecommand{\href}[2]{#2}
\begin{thebibliography}{YAO{\etalchar{+}}11}

\bibitem[ACTA17]{Ashwin.etal.17}
Peter Ashwin, Jennifer Creaser, and Krasimira Tsaneva-Atanasova, \emph{Fast and
  slow domino effects in transient network dynamics}, arXiv preprint
  arXiv:1701.06148 (2017).

\bibitem[AJGA14]{Auoay.etal.14}
Saoussen Aouay, Salma Jamoussi, Faiez Gargouri, and Ajith Abraham,
  \emph{Modeling dynamics of social networks: A survey}, Computational Aspects
  of Social Networks (CASoN), 2014 6th International Conference on, IEEE, 2014,
  pp.~49--54.

\bibitem[Alt12]{Altafini.12}
Claudio Altafini, \emph{Dynamics of opinion forming in structurally balanced
  social networks}, PloS one \textbf{7} (2012), no.~6, e38135.

\bibitem[BFG07a]{Berglund.Fernandez.Gentz.07.1}
Nils Berglund, Bastien Fernandez, and Barbara Gentz, \emph{Metastability in
  interacting nonlinear stochastic differential equations: I. from weak
  coupling to synchronization}, Nonlinearity \textbf{20} (2007), no.~11, 2551.

\bibitem[BFG07b]{Berglund.Fernandez.Gentz.07.2}
\bysame, \emph{Metastability in interacting nonlinear stochastic differential
  equations: Ii. large-n behaviour}, Nonlinearity \textbf{20} (2007), no.~11,
  2583.

\bibitem[BRG16]{Berghardt.Rand.Girvan.16}
Keith Burghardt, William Rand, and Michelle Girvan, \emph{Competing opinions
  and stubborness: connecting models to data}, Physical Review E \textbf{93}
  (2016), no.~3, 032305.

\bibitem[CFL09]{Castellano.Fortunato.Loreto.09}
Claudio Castellano, Santo Fortunato, and Vittorio Loreto, \emph{Statistical
  physics of social dynamics}, Reviews of modern physics \textbf{81} (2009),
  no.~2, 591.

\bibitem[CH56]{Cartwright.Harary.56}
Dorwin Cartwright and Frank Harary, \emph{Structural balance: a generalization
  of heider's theory.}, Psychological review \textbf{63} (1956), no.~5, 277.

\bibitem[CH68]{Cartwright.Harary.68}
D.~Cartwright and F.~Harary, \emph{On the coloring of signed graphs}, Elem.
  Math. \textbf{23} (1968), 85--89. \MR{0233732}

\bibitem[DGM14]{Das.Gollapudi.Mungala.14}
Abhimanyu Das, Sreenivas Gollapudi, and Kamesh Munagala, \emph{Modeling opinion
  dynamics in social networks}, Proceedings of the 7th ACM international
  conference on Web search and data mining, ACM, 2014, pp.~403--412.

\bibitem[DL94]{Durrett.Levin.94}
Richard Durrett and Simon~A Levin, \emph{Stochastic spatial models: a user's
  guide to ecological applications}, Philosophical Transactions of the Royal
  Society of London B: Biological Sciences \textbf{343} (1994), no.~1305,
  329--350.

\bibitem[DPLM14]{DaiPra.Louis.Minelli.14}
Paolo Dai~Pra, Pierre-Yves Louis, and Ida~G Minelli, \emph{Synchronization via
  interacting reinforcement}, Journal of Applied Probability \textbf{51}
  (2014), no.~02, 556--568.

\bibitem[GS13]{Ghaderi.Srikant.13}
Javad Ghaderi and R~Srikant, \emph{Opinion dynamics in social networks: A local
  interaction game with stubborn agents}, American Control Conference (ACC),
  2013, IEEE, 2013, pp.~1982--1987.

\bibitem[HL75]{Holley.Liggett.75}
Richard~A. Holley and Thomas~M. Liggett, \emph{Ergodic theorems for weakly
  interacting infinite systems and the voter model}, Ann. Probability
  \textbf{3} (1975), no.~4, 643--663. \MR{0402985}

\bibitem[HL78]{Holley.Liggett.78}
R.~Holley and T.~M. Liggett, \emph{The survival of contact processes}, Ann.
  Probability \textbf{6} (1978), no.~2, 198--206. \MR{0488379}

\bibitem[JGN01]{Jin.Girvan.Newman.01}
Emily~M Jin, Michelle Girvan, and Mark~EJ Newman, \emph{Structure of growing
  social networks}, Physical review E \textbf{64} (2001), no.~4, 046132.

\bibitem[KLB09]{Kunegis.Lommatzsch.Bauchhage.09}
J{\'e}r{\^o}me Kunegis, Andreas Lommatzsch, and Christian Bauckhage, \emph{The
  slashdot zoo: mining a social network with negative edges}, Proceedings of
  the 18th international conference on World wide web, ACM, 2009, pp.~741--750.

\bibitem[KSL{\etalchar{+}}10]{Kunegis.etal.12}
J{\'e}r{\^o}me Kunegis, Stephan Schmidt, Andreas Lommatzsch, J{\"u}rgen Lerner,
  Ernesto~W De~Luca, and Sahin Albayrak, \emph{Spectral analysis of signed
  graphs for clustering, prediction and visualization}, Proceedings of the 2010
  SIAM International Conference on Data Mining, SIAM, 2010, pp.~559--570.

\bibitem[Lig99]{Liggett.book}
Thomas~M. Liggett, \emph{Stochastic interacting systems: contact, voter and
  exclusion processes}, Grundlehren der Mathematischen Wissenschaften
  [Fundamental Principles of Mathematical Sciences], vol. 324, Springer-Verlag,
  Berlin, 1999. \MR{1717346}

\bibitem[Maw13]{mawhin2013variations}
Jean Mawhin, \emph{Variations on poincar{\'e}-miranda's theorem}, Advanced
  Nonlinear Studies \textbf{13} (2013), no.~1, 209--217.

\bibitem[NBW11]{Newman.Barabasi.Watts.book}
Mark Newman, Albert-Laszlo Barabasi, and Duncan~J Watts, \emph{The structure
  and dynamics of networks}, Princeton University Press, 2011.

\bibitem[NG04]{Newman.Girvan.04}
Mark~EJ Newman and Michelle Girvan, \emph{Finding and evaluating community
  structure in networks}, Physical review E \textbf{69} (2004), no.~2, 026113.

\bibitem[SAR08]{Sood.Antal.Redner.08}
V.~Sood, Tibor Antal, and S.~Redner, \emph{Voter models on heterogeneous
  networks}, Phys. Rev. E (3) \textbf{77} (2008), no.~4, 041121, 13.
  \MR{2495459}

\bibitem[Str01]{Strogatz.01}
Steven~H Strogatz, \emph{Exploring complex networks}, Nature \textbf{410}
  (2001), no.~6825, 268--276.

\bibitem[SWS00]{Sznajd.00}
Katarzyna Sznajd-Weron and Jozef Sznajd, \emph{Opinion evolution in closed
  community}, International Journal of Modern Physics C \textbf{11} (2000),
  no.~06, 1157--1165.

\bibitem[Wig03]{MR2004534}
Stephen Wiggins, \emph{Introduction to applied nonlinear dynamical systems and
  chaos}, second ed., Texts in Applied Mathematics, vol.~2, Springer-Verlag,
  New York, 2003. \MR{2004534}

\bibitem[WS98]{Watts.Strogatz.98}
Duncan~J Watts and Steven~H Strogatz, \emph{Collective dynamics of
  ‘small-world’networks}, nature \textbf{393} (1998), no.~6684, 440--442.

\bibitem[YAO{\etalchar{+}}11]{Yildiz.etal.11}
Ercan Yildiz, Daron Acemoglu, Asuman~E Ozdaglar, Amin Saberi, and Anna
  Scaglione, \emph{Discrete opinion dynamics with stubborn agents}.

\end{thebibliography}
\bibliographystyle{amsalpha}


\end{document}